\definecolor{mno}{rgb}{0.5,0.1,0.5}
\newcommand{\R}{\mathds R}
\newcommand{\I}{\mathds 1}
\newtheorem{theorem}{Theorem}[section]
\newtheorem{lemma}[theorem]{Lemma}
\newtheorem{proposition}[theorem]{Proposition}
\newtheorem{corollary}[theorem]{Corollary}
\theoremstyle{definition}
\newtheorem{remark}[theorem]{Remark}
\begin{document}

\title[Harnack Inequalities for O-U Processes Driven by L\'{e}vy Processes]{\bfseries Harnack Inequalities for Ornstein-Uhlenbeck Processes Driven by L\'{e}vy Processes}

\thanks{\emph{J.\ Wang:}
School of Mathematics and Computer Science, Fujian Normal
University, 350007, Fuzhou, P.R. China \emph{and} TU Dresden,
Institut f\"{u}r Mathematische Stochastik, 01062 Dresden, Germany.
\texttt{jianwang@fjnu.edu.cn}}

\date{}

\maketitle

\begin{abstract} By using the existing sharp estimates of density
function for rotationally invariant symmetric $\alpha$-stable
L\'{e}vy processes and rotationally invariant symmetric truncated
$\alpha$-stable L\'{e}vy processes, we obtain that Harnack
inequalities hold for rotationally invariant symmetric
$\alpha$-stable L\'{e}vy processes with $\alpha\in(0,2)$ and
Ornstein-Uhlenbeck processes driven by rotationally invariant
symmetric $\alpha$-stable L\'{e}vy process, while logarithmic
Harnack inequalities are satisfied for rotationally invariant
symmetric truncated $\alpha$-stable L\'{e}vy processes.

\medskip

\noindent\textbf{Keywords:} Harnack inequalities; logarithmic
Harnack inequalities; Ornstein-Uhlenbeck processes; $\alpha$-stable
L\'{e}vy processes

\medskip

\noindent \textbf{MSC 2010:} 60J25; 60J51; 60J52.
\end{abstract}

\section{Main Results}\label{section1}
The dimension free Harnack inequality for diffusion semigroups was
first established in (Wang, 1997) under a curvature condition. Using
the coupling method and Girsanov transformations, this inequality
has been derived in (Arnaudon et al., 2006) for diffusions with
curvature unbounded below, also see e.g.\ (Wang, 2007, 2010b;
Ouyang, 2009) and references therein for recent work on this topic.
These methods are applied in (R\"{o}ckner and Wang, 2003; Ouyang et
al., 2011) to study Harnack inequalities for a class of
Ornstein-Uhlenbeck type processes driven by L\'{e}vy process with
non-degenerate Gaussian noise on Hilbert spaces. In this paper we
will establish explicit Harnack inequalities for Ornstein-Uhlenbeck
processes driven by pure L\'{e}vy jump processes. For this aim, we
will make full use of the existing estimates of density functions
for L\'{e}vy processes.

\medskip

Let $L_t$ be a $d$-dimensional L\'{e}vy process starting from the
origin and $A$ be a $d\times d$ matrix. The corresponding
\emph{Ornstein-Uhlenbeck process} is defined as the unique strong
solution of the stochastic differential equation
$$dX_t=AX_tdt+dL_t.$$ Denote by $P_t$ the semigroup of $X_t$, and by $B_b^+(\R^d)$ the set of all bounded and nonnegative
measurable functions on $\R^d$. Our main contribution is as follows:

\begin{theorem}\label{section1th1} Assume that the L\'{e}vy measure of $L_t$ satisfies
$$\nu(dz)\ge c|z|^{-d-\alpha}dz,$$ where
$\alpha\in (0,2)$ and $c>0$. Then there exists $C>0$ (only depending
on $d$, $\alpha$, $c$ and $\|A\|$) such that for any $t>0$, $x$, $y\in\R^d$ and $f\in
B_b^+(\R^d)$,
\begin{equation}\label{section1th1222} P_tf(x)\le
CP_tf(y)\bigg(1+\frac{|x-y|}{(t\wedge1)^{1/\alpha}}\bigg)^{d+\alpha}.\end{equation}
If moreover $A=0$, then
\begin{equation*}\label{section1th122222}P_tf(x)\le
CP_tf(y)\bigg(1+\frac{|x-y|}{t^{1/\alpha}}\bigg)^{d+\alpha}.\end{equation*}\end{theorem}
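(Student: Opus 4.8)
The plan is to reduce \eqref{section1th1222} to a pointwise estimate on the transition density and then bring in the sharp density bounds for the $\alpha$-stable part. First I would use the mild solution $X_t = e^{tA}x + Y_t$ with $Y_t := \int_0^t e^{(t-s)A}\,dL_s$, noting that $Y_t$ does not depend on the starting point $x$. If $Y_t$ has a density $p_t$, then $P_tf(x) = \int_{\R^d} f(w)\,p_t(w-e^{tA}x)\,dw$, so the transition density of $X$ from $x$ to $w$ is $p_t(w-e^{tA}x)$. Writing $\Phi := \big(1+|x-y|/(t\wedge1)^{1/\alpha}\big)^{d+\alpha}$ and using $f\ge0$, the bound \eqref{section1th1222} reduces to the pointwise ratio estimate
\[
p_t(w-e^{tA}x)\le C\,\Phi\,p_t(w-e^{tA}y)\qquad\text{for all }w\in\R^d.
\]

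Second, to make use of the \emph{lower} bound $\nu(dz)\ge c|z|^{-d-\alpha}\,dz$, I would split $L$ into two independent L\'evy processes $L_t=S_t+R_t$, where $S$ is a rotationally invariant symmetric $\alpha$-stable process with L\'evy measure $c|z|^{-d-\alpha}\,dz$ and $R$ carries the remaining characteristics, its L\'evy measure $\nu(dz)-c|z|^{-d-\alpha}\,dz\ge0$ being a genuine nonnegative measure. Then $Y_t=U_t+V_t$ with $U_t:=\int_0^te^{(t-s)A}\,dS_s$ and $V_t:=\int_0^te^{(t-s)A}\,dR_s$ independent, so $p_t=q_t*\mu_t$, where $q_t$ is the density of $U_t$ and $\mu_t$ the law of $V_t$. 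A pointwise ratio bound for $q_t$ alone then suffices: integrating $q_t(w-\xi-e^{tA}x)\le C\,\Phi\,q_t(w-\xi-e^{tA}y)$ against $\mu_t(d\xi)$ returns the same bound for $p_t$. This is exactly where the lower bound enters, letting me discard $R$ and reduce everything to the stable convolution $U_t$.

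Third, the technical core is a two-sided estimate for $q_t$. For $t\le1$ I would combine the sharp density estimates for rotationally invariant symmetric $\alpha$-stable processes with the identity
\[
\Ee\, e^{i\langle\xi,U_t\rangle}=\exp\Big(-\int_0^t|e^{sA^*}\xi|^\alpha\,ds\Big)
\]
(for the natural normalization of $S$) and the elementary comparison $e^{-\alpha\|A\|}\,t|\xi|^\alpha\le\int_0^t|e^{sA^*}\xi|^\alpha\,ds\le e^{\alpha\|A\|}\,t|\xi|^\alpha$ valid for $0<t\le1$. This identifies $q_t$, up to constants depending only on $d,\alpha,c,\|A\|$, with the stable profile $g_t(z):=t^{-d/\alpha}\big(1+|z|\,t^{-1/\alpha}\big)^{-d-\alpha}$. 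Since $|e^{tA}(x-y)|\le e^{\|A\|}|x-y|$ for $t\le1$ and $g_t(u)/g_t(u')\le\big(1+|u-u'|\,t^{-1/\alpha}\big)^{d+\alpha}$ by the triangle inequality, the ratio bound for $q_t$, and hence \eqref{section1th1222}, follows for $t\le1$.

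Finally, the range $t>1$ reduces to the small-time case by the semigroup property: with $g:=P_{t-1}f\in B_b^+(\R^d)$, the case $t=1$ gives $P_tf(x)=P_1g(x)\le C\,(1+|x-y|)^{d+\alpha}P_1g(y)=C\,(1+|x-y|)^{d+\alpha}P_tf(y)$, which is \eqref{section1th1222} because $(t\wedge1)^{1/\alpha}=1$ there. When $A=0$ we have $U_t=S_t$ and $e^{tA}=I$, so genuine stable scaling makes $q_t$ comparable to $g_t$ for \emph{all} $t>0$ with no $e^{\|A\|}$ factor, giving the cleaner global inequality. I expect the main obstacle to be the two-sided estimate for the stable Ornstein--Uhlenbeck convolution $q_t$ in the third step; the convolution argument, the algebraic ratio manipulation and the large-time reduction are then routine.
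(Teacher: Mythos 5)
Your overall skeleton --- split off the rotationally invariant $\alpha$-stable part, prove a pointwise ratio bound for its density, push that bound through the remaining convolution, and reduce $t>1$ to $t\le 1$ via the semigroup property --- is the same as the paper's, and your case $A=0$, your convolution-transfer step, and your large-time reduction are all sound. The problem is your third step, which you yourself identify as the core. You claim that because the characteristic exponent of $U_t=\int_0^t e^{(t-s)A}\,dS_s$ satisfies $e^{-\alpha\|A\|}\,t|\xi|^\alpha\le \int_0^t |e^{sA^*}\xi|^\alpha\,ds\le e^{\alpha\|A\|}\,t|\xi|^\alpha$ for $t\le 1$, the density $q_t$ is comparable to the stable profile $g_t$. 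The implication ``comparable symbols $\Rightarrow$ comparable densities'' is false in general. Counterexample: in $d=2$, take two independent one-dimensional symmetric $\alpha$-stable processes; the symbol $|\xi_1|^\alpha+|\xi_2|^\alpha$ is comparable to $|\xi|^\alpha$, yet at $z=(R,0)$ with $R\gg t^{1/\alpha}$ the density is of order $t^{-1/\alpha}\cdot tR^{-1-\alpha}$, while the rotationally invariant profile is of order $tR^{-2-\alpha}$, so the ratio grows like $Rt^{-1/\alpha}$ and is unbounded. Pointwise control of $e^{-\psi_t(\xi)}$ only yields the on-diagonal bound $q_t\le Ct^{-d/\alpha}$; Fourier inversion does not transfer two-sided bounds, and in particular gives you neither the off-diagonal decay nor the crucial lower bound on $q_t$.

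The paper circumvents exactly this difficulty (following Wang 2011): instead of estimating the density of the deformed convolution $U_t$, it extracts an \emph{exact} stable factor at the level of symbols. The key observation is that the pushforward of the stable L\'evy measure $c|z|^{-d-\alpha}dz$ under $e^{sA}$, $s\in[0,1]$, still dominates $c_0c|w|^{-d-\alpha}dw$ for a suitable constant $c_0>0$, so that $\xi\mapsto \Phi(e^{sA^*}\xi)-c_0c|\xi|^\alpha$ is again a L\'evy symbol. Consequently the law $\mu_t$ of your $Y_t$ factors as the convolution of a genuine rotationally invariant symmetric $\alpha$-stable law with some probability measure $\pi_t$; the Blumenthal--Getoor two-sided estimates then apply verbatim to the exact stable density, and the Harnack ratio passes through the convolution with $\pi_t$ exactly as in your second step. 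If you insist on your route, you would need two-sided estimates for $q_t$ proved by a genuinely different method --- for instance exploiting that the L\'evy measure of $U_t$ (not merely its symbol) is two-sided comparable to the stable one, which is a nontrivial heat-kernel-type result --- but the cleaner fix is to replace your symbol-comparison argument by this exact-factor extraction.
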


\begin{remark} We shall point out that the above type of Harnack inequality does not hold for Brownian motion.
That is, let $P_t$ be the semigroup of Brownian motion on $\R^d$.
Then, there does not exist a finite measurable function $\Phi$ on
$(0,\infty)\times \R^d\times \R^d$ such that for any $t>0$, $x$,
$y\in\R^d$ and $f\in B_b^+(\R^d)$,
\begin{equation*}P_tf(x)\le P_tf(y) \Phi(t,x,y).\end{equation*}
Indeed, such inequality is not true for most diffusion semigroups on
noncompact manifolds, e.g.\ see (Wang, 2006a).
\end{remark}

\medskip

By the H\"{o}lder inequality, Theorem \ref{section1th1} immediately
yields the following result, which indicates that \emph{F.-Y. Wang's
Harnack inequalities} hold for rotationally invariant symmetric
$\alpha$-stable L\'{e}vy processes with $\alpha\in(0,2)$ and
Ornstein-Uhlenbeck processes driven by rotationally invariant
symmetric $\alpha$-stable L\'{e}vy process.

\begin{corollary}\label{section1co1} Under the condition in Theorem \ref{section1th1},
there exists a constant $C>0$ such that for any $t>0$, $p>1$, $x$,
$y\in\R^d$ and $f\in B_b^+(\R^d)$,
\begin{equation}\label{section1th12223333}\Big(P_tf(x)\Big)^p\le
CP_tf^p(y)\bigg(1+\frac{|x-y|}{(t\wedge1)^{1/\alpha}}\bigg)^{p(d+\alpha)}.\end{equation}
If moreover $A=0$, then
$$\Big(P_tf(x)\Big)^p\le
CP_tf^p(y)\bigg(1+\frac{|x-y|}{t^{1/\alpha}}\bigg)^{p(d+\alpha)}.$$\end{corollary}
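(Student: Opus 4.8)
The plan is to obtain \eqref{section1th12223333} directly from Theorem \ref{section1th1} by combining it with a single application of the H\"older (equivalently, Jensen) inequality to the semigroup $P_t$. The crucial point, and the reason the constant $C$ in the corollary can be taken \emph{independent of $p$}, is the order in which the two ingredients are applied. First, since the Ornstein-Uhlenbeck semigroup is (sub-)Markovian, the kernel $p_t(x,\cdot)$ of $P_t$ is a (sub-)probability measure and $P_t\I\le\I$. Writing $P_tf(x)=\int f\,p_t(x,\cdot)$ and applying H\"older's inequality with conjugate exponents $p$ and $p/(p-1)$, I would get, for every $f\in B_b^+(\R^d)$ and $p>1$,
\begin{equation*}
\big(P_tf(x)\big)^p\le P_tf^p(x)\,\big(P_t\I(x)\big)^{p-1}\le P_tf^p(x).
\end{equation*}
This is the only place H\"older enters, and it converts the $p$-th power of $P_tf$ into the single operator $P_t$ acting on the function $f^p$.

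Next, I would observe that $f^p\in B_b^+(\R^d)$ whenever $f\in B_b^+(\R^d)$, so Theorem \ref{section1th1} applies verbatim with $f$ replaced by $f^p$, giving
\begin{equation*}
P_tf^p(x)\le C\,P_tf^p(y)\Big(1+\tfrac{|x-y|}{(t\wedge1)^{1/\alpha}}\Big)^{d+\alpha}
\end{equation*}
with the \emph{same} constant $C$ as in \eqref{section1th1222}, in particular one not depending on $p$. Chaining the two displays already yields \eqref{section1th12223333} with the smaller exponent $d+\alpha$; since the base satisfies $1+|x-y|/(t\wedge1)^{1/\alpha}\ge 1$ and $p>1$, one may freely enlarge the exponent from $d+\alpha$ to $p(d+\alpha)$, recovering exactly the stated form. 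The case $A=0$ is handled identically, with $(t\wedge1)^{1/\alpha}$ replaced throughout by $t^{1/\alpha}$ via the second inequality of Theorem \ref{section1th1}.

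The only genuine subtlety, and what I would flag as the main (if modest) obstacle, is precisely the uniformity of $C$ in $p$. The tempting shortcut is to raise \eqref{section1th1222} itself to the $p$-th power and only afterwards apply Jensen in the form $\big(P_tf(y)\big)^p\le P_tf^p(y)$; this route reproduces the exponent $p(d+\alpha)$ directly, but at the cost of a constant $C^p$, which blows up as $p\to\infty$ and therefore does \emph{not} deliver a single constant valid for all $p>1$. Applying H\"older \emph{before} invoking Theorem \ref{section1th1} sidesteps this entirely, which is why ordering the two steps correctly is the whole content of the argument.
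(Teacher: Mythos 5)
Your proof is correct and coincides with the paper's own (implicit) argument: the paper offers no separate proof, merely remarking that the corollary follows from Theorem \ref{section1th1} ``by the H\"older inequality.'' Your additional point about ordering --- applying H\"older first, so that
\begin{equation*}
\big(P_tf(x)\big)^p\le P_tf^p(x)\le C\,P_tf^p(y)\Big(1+\tfrac{|x-y|}{(t\wedge1)^{1/\alpha}}\Big)^{d+\alpha}\le C\,P_tf^p(y)\Big(1+\tfrac{|x-y|}{(t\wedge1)^{1/\alpha}}\Big)^{p(d+\alpha)},
\end{equation*}
rather than raising \eqref{section1th1222} to the $p$-th power --- is precisely what makes the constant $C$ uniform in $p$, as the quantifier order in the statement (and the paper's comparison with Gordina et al.\ for all $p\ge 1$) requires.
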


Comparing with (Gordina et al., 2011, Theorem 1.1) where a dimension
free Harnack inequality is presented for $\alpha\in (1,2)$, the
advantage of our results is that they hold for any $p\ge1$ and
$\alpha\in (0,2)$. On the other hand, however, our results are
dimension-dependent so that they can not be extended to infinite
dimensions. Next, (Gordina et al., 2011, Theorem 1.4) includes a
dimension-free log-Harnack inequality for $\alpha\in (0,1]$. Below
we present a simpler but dimension-dependent version of this
inequality for a truncated $\alpha$-stable L\'{e}vy process. We
refer to (Bobkov et al., 2001; Wang, 2010a) for the background and
some properties of logarithmic Harnack inequalities.

\begin{theorem}\label{section1th2} Assume that the L\'{e}vy measure of $L_t$ satisfies
\begin{equation*}\label{1proff2}\nu(dz)\ge c|z|^{-d-\alpha}\I_{\{|z|\le r\}}dz,\end{equation*} where
$\alpha\in (0,2)$ and $c$, $r\in (0,\infty)$. Let $P_t$ be the
semigroup of $L_t$. Then there exists $C>0$ (only depending on $d$,
$\alpha$, $c$ and $r$) such that for any $t>0$, $x$, $y\in\R^d$ and $f\in B_b^+(\R^d)$ with
$f\ge1$,
$$P_t(\log f)(x)\le \log P_tf(y)+C\big(1+|x-y|\big)\log\bigg(\frac{2+|x-y|}{t\wedge1}\bigg).$$\end{theorem}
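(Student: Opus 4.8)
The plan is to reduce the logarithmic Harnack inequality to a relative entropy estimate between two shifts of the law of $L_t$, and then to control that entropy by means of the sharp heat kernel bounds for the truncated $\alpha$-stable process.

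First I would record the Donsker--Varadhan (entropy) inequality. Writing $\mu_x$ and $\mu_y$ for the laws of $x+L_t$ and $y+L_t$, and noting that both have strictly positive densities (so $\mu_x\ll\mu_y$), for every bounded $f\ge1$ one has $\int\log f\,d\mu_x\le\log\int f\,d\mu_y+\mathrm{Ent}(\mu_x\|\mu_y)$, that is
$$P_t(\log f)(x)\le\log P_tf(y)+\mathrm{Ent}(\mu_x\|\mu_y).$$
Thus it suffices to prove $\mathrm{Ent}(\mu_x\|\mu_y)\le C(1+|x-y|)\log\frac{2+|x-y|}{t\wedge1}$. Next I would remove the ``lower bound only'' nature of the hypothesis. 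Decompose $L_t=L_t^{(1)}+L_t^{(2)}$ into independent L\'evy processes, where $L^{(1)}$ has L\'evy measure exactly $c|z|^{-d-\alpha}\I_{\{|z|\le r\}}\,dz$ (the truncated $\alpha$-stable process) and $L^{(2)}$ carries the nonnegative remainder $\nu-c|z|^{-d-\alpha}\I_{\{|z|\le r\}}\,dz$. Since $\mu_x=\mathrm{Law}(x+L_t^{(1)})*\mathrm{Law}(L_t^{(2)})$ and relative entropy does not increase under a common Markov (here convolution) kernel, it is enough to bound the entropy for $L^{(1)}$ alone. Letting $p_t$ now denote the strictly positive, translation-invariant density of the truncated stable process and writing $h=x-y$,
$$\mathrm{Ent}(\mu_x\|\mu_y)=\Ee\Big[\log\frac{p_t(L_t^{(1)})}{p_t(L_t^{(1)}+h)}\Big].$$

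For $t\in(0,1]$ I would then insert the sharp heat kernel estimates for the truncated stable process: an upper bound giving $\log p_t(z)\le\frac{d}{\alpha}\log\frac1t+C$ for all $z$ (the density is maximal near the origin), and the far-field lower bound giving $-\log p_t(w)\le C(1+|w|)\log\frac{2+|w|}{t}$, which reflects that reaching a point at distance $|w|$ using jumps of size at most $r$ requires order $|w|$ small jumps. Combining these with $|L_t^{(1)}+h|\le|L_t^{(1)}|+|h|$ and the elementary inequality $\log(2+a+b)\le\log(2+b)+\log(1+a)$ reduces matters to bounding $\Ee[(1+|L_t^{(1)}|)\log(2+|L_t^{(1)}|)]$. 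Because the jumps of $L^{(1)}$ are bounded by $r$, its Laplace transform is finite everywhere and $\Ee[e^{\lambda|L_t^{(1)}|}]\le e^{t\psi(\lambda)}$ stays bounded for $t\le1$; hence all such logarithmic moments are $O(1)$ uniformly in $t\in(0,1]$, and the stray $\log\frac1t$ factors are absorbed into $C(1+|h|)\log\frac{2+|h|}{t}$. This yields the claim for $t\le1$.

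Finally, for $t>1$ I would again use that entropy is nonincreasing under convolution: since $\mathrm{Law}(x+L_t^{(1)})=\mathrm{Law}(x+L_1^{(1)})*\mathrm{Law}(L_t^{(1)}-L_1^{(1)})$, the entropy at time $t$ is dominated by the entropy at time $1$, which by the previous step is at most $C(1+|h|)\log(2+|h|)=C(1+|h|)\log\frac{2+|h|}{t\wedge1}$. I expect the main obstacle to be the third step: establishing (or correctly quoting) the super-polynomial far-field density bound $-\log p_t(w)\lesssim(1+|w|)\log\frac{2+|w|}{t}$, which is precisely where the truncation of the L\'evy measure enters and which produces the characteristic $(1+|x-y|)\log(\cdot)$ growth, together with the uniform control of the logarithmic moments of $L_t^{(1)}$.
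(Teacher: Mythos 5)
Your proposal is correct and follows essentially the same route as the paper: the Donsker--Varadhan entropy inequality is exactly the paper's Young-inequality step, the decomposition $L_t=L^{(1)}_t+L^{(2)}_t$ with reduction by data processing is the paper's Jensen-inequality reduction for $P_t=P^T_tP'_t$ in disguise, and the entropy bound for the truncated stable part rests on the same Chen--Kim--Kumagai kernel estimates (on-diagonal upper bound and far-field bounds) that the paper quotes, with the $t>1$ case handled by the same time-splitting idea. The only differences are cosmetic: the paper bounds the entropy integral via a pointwise density-ratio lemma and direct integration over the regions $|y-z|\le 2\vee|x-y|$ and $|y-z|> 2\vee|x-y|$, whereas you combine the sup bound on the numerator, the far-field lower bound on the denominator, and uniform exponential moments of the bounded-jump process $L^{(1)}$; both executions are valid.
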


\medskip

\noindent {\bf Note}\,\, The classical Harnack's inequality is an
inequality relating the values of a positive harmonic function at
two points. In this paper, we also call \eqref{section1th1222} or
\eqref{section1th12223333} the Harnack inequality. The notation,
different from the classical one, is due to Prof. F.-Y. Wang's work.
The readers can refer to the survey paper (Wang, 2006b) for more
details of Wang's Harnack inequality.

\section{Proofs}

The idea of the proofs of Theorems \ref{section1th1} and
\ref{section1th2} is to compare the original process with rotationally invariant symmetric $\alpha$-stable L\'{e}vy processes and with rotationally invariant symmetric truncated $\alpha$-stable
L\'{e}vy processes, respectively.

\subsection{Proof of Theorem \ref{section1th1}}  We first present some preliminary results concerning
rotationally invariant symmetric $\alpha$-stable L\'{e}vy process $X_t$, which is a pure jump
L\'{e}vy process on $\R^d$ with L\'{e}vy measure
\begin{equation}\label{st1}\nu(dz)=c|z|^{-(d+\alpha)}dz,\end{equation}
where $c>0$ and $\alpha\in(0,2).$

According to (Blumenthal and Getoor, 1960, Theorem 2.1), we know
that the process $X_t$ has a continuous density function $p_t(x,y)$,
and
 there exist constants $c_1$ and $c_2>0$ that depend
only on $d$, $\alpha$ and the constant $c$ in \eqref{st1} such that
for all $x$, $y\in\R^d$ and $t>0$,
\begin{equation}\label{st2}c_1\min\bigg\{t^{-d/\alpha}, \frac{t}{|x-y|^{d+\alpha}}\bigg\}\le p_t(x,y)\le c_2\min\bigg\{t^{-d/\alpha}, \frac{t}{|x-y|^{d+\alpha}}\bigg\}. \end{equation}

\medskip

The following result is based on \eqref{st2}, and it is a key in the proof of Theorem \ref{section1th1}.
\begin{lemma}\label{levt} Let $p_t(x,y)$ be the density function of the rotationally invariant symmetric $\alpha$-stable L\'{e}vy process $X_t$ above.
Then, for any $t>0$ and $x$, $y$, $z\in\R^d$,
\begin{equation}\label{st3}\frac{p_t(x,z)}{p_t(y,z)}\le
\frac{2^{\alpha+d}c_2}{c_1}\bigg(1+\frac{|x-y|}{t^{1/\alpha}}\bigg)^{d+\alpha},
\end{equation}
\end{lemma}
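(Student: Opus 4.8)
The plan is to apply the two-sided heat kernel estimate \eqref{st2} directly, bounding the numerator $p_t(x,z)$ from above and the denominator $p_t(y,z)$ from below. This immediately produces the factor $c_2/c_1$ and reduces \eqref{st3} to a purely deterministic estimate for the ratio of the two minima
$$\frac{\min\{t^{-d/\alpha},\,t\,|x-z|^{-(d+\alpha)}\}}{\min\{t^{-d/\alpha},\,t\,|y-z|^{-(d+\alpha)}\}}.$$
Writing $u=t^{1/\alpha}$ (so that $t=u^{\alpha}$), $a=|x-z|$ and $b=|y-z|$, the whole task is to control this quantity in terms of $|x-y|$, uniformly in the position of $z$.

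First I would record the elementary comparison that, for every $r>0$,
$$2^{-(d+\alpha)}\min\{u^{-d},\,t\,r^{-(d+\alpha)}\}\le\frac{t}{(u+r)^{d+\alpha}}\le\min\{u^{-d},\,t\,r^{-(d+\alpha)}\}.$$
This follows by splitting into the cases $r\le u$ and $r\ge u$: in the first the minimum equals $u^{-d}$ and $u+r\in[u,2u]$, in the second the minimum equals $t\,r^{-(d+\alpha)}$ and $u+r\in[r,2r]$, and in each case the two bounds are read off directly. The point of this step is that it replaces each minimum by the single smooth profile $t/(u+r)^{d+\alpha}$ at the cost of the factor $2^{d+\alpha}$, and it is precisely this normalization that will account for the constant $2^{\alpha+d}$ in \eqref{st3}.

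With this comparison in hand, the numerator minimum is at most $2^{d+\alpha}\,t/(u+a)^{d+\alpha}$ while the denominator minimum is at least $t/(u+b)^{d+\alpha}$, so the ratio of minima is bounded by $2^{d+\alpha}\bigl((u+b)/(u+a)\bigr)^{d+\alpha}$. The triangle inequality $b=|y-z|\le|x-z|+|x-y|=a+|x-y|$ then gives $u+b\le u+a+|x-y|$, whence
$$\frac{u+b}{u+a}\le 1+\frac{|x-y|}{u+a}\le 1+\frac{|x-y|}{t^{1/\alpha}}.$$
Combining these inequalities together with the prefactor $c_2/c_1$ yields \eqref{st3} with the stated constant.

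I expect the only real content to be the two-sided comparison of the minimum with the profile $t/(u+r)^{d+\alpha}$; once that normalization is made, the conclusion is a one-line application of the triangle inequality, and no probabilistic input beyond the estimate \eqref{st2} enters. I would also note that the rotational and translational invariance of $X_t$ guarantees that \eqref{st2} depends on $x$ and $y$ only through the distance to $z$, so the resulting bound is automatically uniform in $z$, as required by the statement.
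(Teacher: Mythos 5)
Your proof is correct and yields exactly the constant $2^{d+\alpha}c_2/c_1$ claimed in \eqref{st3}, but it takes a genuinely different route from the paper. The paper argues directly on the two minima in \eqref{st2}, splitting into three cases according to where $z$ lies: $|y-z|\le t^{1/\alpha}$; $|y-z|\ge 2(t^{1/\alpha}\vee|x-y|)$; and the intermediate regime $t^{1/\alpha}<|y-z|\le 2(t^{1/\alpha}\vee|x-y|)$; in each case it identifies which branch of the minimum is active in numerator and denominator and bounds the resulting ratio. You instead perform a single normalization: the two-sided comparison
\[
2^{-(d+\alpha)}\min\Big\{t^{-d/\alpha},\frac{t}{r^{d+\alpha}}\Big\}
\le \frac{t}{(t^{1/\alpha}+r)^{d+\alpha}}
\le \min\Big\{t^{-d/\alpha},\frac{t}{r^{d+\alpha}}\Big\},\qquad r>0,
\]
which replaces each minimum by the monotone profile $r\mapsto t/(t^{1/\alpha}+r)^{d+\alpha}$; after that, the lemma is a one-line consequence of the triangle inequality $|y-z|\le|x-z|+|x-y|$. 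The case analysis has not vanished---it is absorbed into this comparison---but there it is carried out once, in the single variable $r$, rather than jointly in $x$, $y$, $z$; moreover only the numerator incurs the factor $2^{d+\alpha}$ (the denominator bound is lossless), which is why you recover the paper's constant exactly. What your version buys is modularity and transparency: it isolates the structural reason the lemma holds (a kernel two-sided comparable to a polynomially decaying profile in $|x-z|$ changes by at most a factor $(1+|x-y|/t^{1/\alpha})^{d+\alpha}$ when the pole is moved from $x$ to $y$), and it applies verbatim to any process whose density satisfies such bounds, in the spirit of Remark \ref{stbelmark}. The paper's version works with \eqref{st2} exactly as stated, without introducing the auxiliary profile, at the cost of the slightly longer three-case discussion.
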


\begin{proof}
(i) When $|y-z|\le t^{1/\alpha},$
$$t^{-d/\alpha}\le\frac{t}{|y-z|^{\alpha+d}},$$ and so $$p_t(y,z)\ge
c_1t^{-d/\alpha}.$$ Then,
$$\frac{p_t(x,z)}{p_t(y,z)}\le \frac{c_2t^{-d/\alpha}}{ c_1t^{-d/\alpha}}=\frac{c_2}{c_1}.$$

(ii) If $|y-z|\ge 2(t^{1/\alpha}\vee |x-y|)$, then $$|x-z|\ge
|y-z|-|x-y|\ge |y-z|-|y-z|/2=|y-z|/2\ge t^{1/\alpha}.$$ We get
$$t^{-d/\alpha}\wedge \frac{t}{|x-z|^{\alpha+d}} = \frac{t}{|x-z|^{\alpha+d}}\le  \frac{2^{\alpha+d}t}{|y-z|^{\alpha+d}}.$$ On the other hand,
$$t^{-d/\alpha}\wedge \frac{t}{|y-z|^{\alpha+d}} =
\frac{t}{|y-z|^{\alpha+d}}.$$ Therefore,
$$\frac{p_t(x,z)}{p_t(y,z)}\le \frac{c_22^{\alpha+d}}{ c_1}.$$

(iii) If $|y-z|\le 2(t^{1/\alpha}\vee |x-y|)$ and
$|y-z|>t^{1/\alpha},$ then
$$t^{-d/\alpha}\wedge \frac{t}{|y-z|^{\alpha+d}} =
\frac{t}{|y-z|^{\alpha+d}}.$$ So,
$$\frac{p_t(x,z)}{p_t(y,z)}\le\frac{c_2t^{-d/\alpha}}{c_1t/|y-z|^{\alpha+d}}=
\frac{c_2|y-z|^{\alpha+d}}{c_1t^{(d+\alpha)/\alpha}}.$$ Noticing
that in this case $$\frac{|y-z|}{t^{1/\alpha}}\le
2\Big(1+\frac{|x-y|}{t^{1/\alpha}}\Big),$$ we have
$$\frac{p_t(x,z)}{p_t(y,z)}\le
\frac{2^{\alpha+d}c_2}{c_1}\bigg(1+\frac{|x-y|}{t^{1/\alpha}}\bigg)^{d+\alpha}.$$

Combining with all the estimates above, we can arrive at
\eqref{st3}. The proof is completed.
  \end{proof}

Having Lemma \ref{levt} at hand, we can easily obtain the following
statement for rotationally invariant symmetric $\alpha$-stable L\'{e}vy processes.

\begin{proposition}\label{levtsss} Let $P_t$ be the semigroup of the rotationally invariant symmetric $\alpha$-stable L\'{e}vy process $X_t$ above.
Then there exists $C>0$ such that for any $t>0$, $x$, $y\in\R^d$ and
$f\in B_b^+(\R^d)$,
\begin{equation}\label{strongharnack}P_tf(x)\le
CP_tf(y)\bigg(1+\frac{|x-y|}{t^{1/\alpha}}\bigg)^{d+\alpha}.\end{equation}
\end{proposition}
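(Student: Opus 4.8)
The plan is to combine the density representation of the semigroup with the uniform pointwise ratio bound of Lemma \ref{levt}. Since $X_t$ possesses the continuous (and strictly positive) density $p_t(x,z)$, for any $f\in B_b^+(\R^d)$ one has
$$P_tf(x)=\int_{\R^d}p_t(x,z)f(z)\,dz,\qquad P_tf(y)=\int_{\R^d}p_t(y,z)f(z)\,dz.$$

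First I would rewrite the integral defining $P_tf(x)$ by inserting the ratio of the two densities, using that $p_t(y,z)>0$ for every $z$, so that
$$P_tf(x)=\int_{\R^d}\frac{p_t(x,z)}{p_t(y,z)}\,p_t(y,z)f(z)\,dz.$$
The crucial observation is that the bound \eqref{st3} in Lemma \ref{levt} is \emph{uniform in $z$}: its right-hand side does not depend on the integration variable. Because $f\ge0$ and hence $p_t(y,z)f(z)\ge0$, I can therefore pull the constant $\frac{2^{\alpha+d}c_2}{c_1}\big(1+|x-y|/t^{1/\alpha}\big)^{d+\alpha}$ out of the integral, obtaining
$$P_tf(x)\le\frac{2^{\alpha+d}c_2}{c_1}\bigg(1+\frac{|x-y|}{t^{1/\alpha}}\bigg)^{d+\alpha}\int_{\R^d}p_t(y,z)f(z)\,dz,$$
which is exactly \eqref{strongharnack} with $C=2^{\alpha+d}c_2/c_1$.

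There is essentially no obstacle in this argument: the entire analytic content of the proposition is already packaged into the pointwise estimate of Lemma \ref{levt}, and the passage from densities to the semigroup is a single line of integration exploiting the nonnegativity of $f$. Note that because I bound $P_tf(x)$ directly by a multiple of $P_tf(y)$ rather than dividing, the degenerate case $P_tf(y)=0$ (which forces $f=0$ a.e.\ and hence $P_tf(x)=0$) requires no separate treatment. The only point worth flagging is that the $z$-independence of the ratio bound is precisely what licenses the factorization; a bound depending on $z$ could not be extracted from the integral in this way.
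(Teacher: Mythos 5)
Your proof is correct and follows exactly the paper's own argument: both insert the ratio $p_t(x,z)/p_t(y,z)$ into the integral representation of $P_tf(x)$ and pull out the $z$-uniform bound from Lemma \ref{levt} to obtain \eqref{strongharnack} with $C=2^{\alpha+d}c_2/c_1$. Your additional remarks on positivity of the density and the degenerate case are harmless but not needed beyond what the paper does.
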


\begin{proof}Let $P(t,x,dz)$ be the transition probability function of $X_t$. We find
that for any $f\in B_b^+(\R^d)$ and $x$, $y\in\R^d$,
\begin{align*}P_tf(x)&=\int f(z)P(t,x,dz)=\int f(z)p_t(x,z)dz\\
&=\int f(z)\frac{p_t(x,z)}{p_t(y,z)}p_t(y,z)dz\\
&\le\int f(z) p_t(y,z)dz\,\max_{z\in\R^d}
\frac{p_t(x,z)}{p_t(y,z)}\\
&\le
CP_tf(y)\bigg(1+\frac{|x-y|}{t^{1/\alpha}}\bigg)^{d+\alpha}.\end{align*}
The proof is completed.
\end{proof}

\begin{remark}\label{stbelmark} According to the proof of Proposition \ref{levtsss}, the inequality \eqref{strongharnack}
is an consequence of \eqref{st2}. Thus, \eqref{strongharnack} is
also satisfied for symmetric (not necessarily rotationally
invariant) $\alpha$-stable L\'{e}vy processes, see e.g.\ (Bogdan et
al., 2003).
\end{remark}

Next, we turn to the proof of Theorem \ref{section1th1}.

\begin{proof}[Proof of Theorem \ref{section1th1}]

(1) We first assume that $A=0$.  Since $\nu(dz)\ge c|z|^{-d-\alpha}dz$, we
can write $$X_t=X_t'+X_t^{S},$$ where $X_t^{S}$ is a rotationally invariant symmetric $\alpha$-stable L\'{e}vy process with L\'{e}vy measure
$$\nu_{X^S}(dz):=c|z|^{-d-\alpha}dz,$$ and $X_t'$
is a L\'{e}vy process with L\'{e}vy measure
$$\nu_{X'}(dz):=\nu(dz)- c|z|^{-d-\alpha}dz\ge0.$$ Let $P_t$, $P_t^{S}$ and $P'_t$ be the semigroups of the L\'{e}vy processes $X_t$, $X_t^{S}$
and $X_t'$, respectively. Then, according to Proposition
\ref{levtsss}, there exists $C>0$ such that for any $t>0$, $x$, $y\in\R^d$ and $f\in
B_b^+(\R^d)$,
$$P^S_tf(x)\le
CP^S_tf(y)\bigg(1+\frac{|x-y|}{t^{1/\alpha}}\bigg)^{d+\alpha}.$$
Therefore, for $t>0$, $x$, $y\in\R^d$ and $f\in B_b^+(\R^d)$,
\begin{align*}P_tf(x)&=P^{S}_tP'_tf(x)\\
&\le
CP^{S}_tP'_tf(y)\bigg(1+\frac{|x-y|}{t^{1/\alpha}}\bigg)^{d+\alpha}\\
&=CP_tf(y)\bigg(1+\frac{|x-y|}{t^{1/\alpha}}\bigg)^{d+\alpha},\end{align*}
where in the above inequality we have used the fact that $P'_tf\in
B_b^+(\R^d)$. The desired assertion \eqref{section1th122222}
follows.

(2) Suppose that $A\neq0$. We borrow an idea from the proof of
(Wang, 2011, Theorem 1.1). It is well known that for the semigroup
of Ornstein-Uhlenbeck process
$$P_tf(x)=\int f(e^{tA}x+y)\mu_t(dy),$$ where $\mu_t$ is the probability measure on $\R^d$ with characteristic function
$$\hat{\mu}_t(\xi)=\exp\bigg[\int_0^t\Phi(e^{sA^*}\xi)ds\bigg],$$ and $\Phi$ is the symbol of the L\'{e}vy process
$L_t$, cf.\ see (Jacob, 2001). Define $$c_0=\int_{\{|z|\le
e^{-\|A\|}\}}\big(1-\cos z_1\big)|z|^{-d}dz.$$ Then, following Part
(III) in the proof of (Wang, 2011, Theorem 1.1), for any $s\in
(0,1]$,
$$\xi \mapsto\Phi(e^{sA^*}\xi)ds-c_0c|\xi|^\alpha$$ is a L\'{e}vy
symbol. Thus, for every $t\in(0,1]$ there exists a probability
measure $\pi_t$ on $\R^d$ with
$$\hat{\pi}_t(\xi)=\exp\bigg[\int_0^t\Phi(e^{sA^*}\xi)ds-tc_0c|\xi|^\alpha\bigg].$$
Let $P_t^S$ be the semigroup for the L\'{e}vy process with symbol $c_0c|\xi|^\alpha$, and define $$P'_t f(x)=\int f(x+z)\pi_t(dz).$$ We have
$$P_tf(x)=P_t^SP'_t f(e^{tA} x).$$ Note that $P_t^S$ is also the semigroup of a rotationally invariant symmetric $\alpha$-stable L\'{e}vy process
with L\'{e}vy measure
$$\nu_{S}(dz):=c_1|z|^{-d-\alpha}dz.$$ Then the required assertion for $t\in(0,1]$ follows form the arguments in step (1).

For any $t>0$ and $f\in B_b^+(\R^d)$,
\begin{align*}P_tf(x)&=P_{t\wedge 1}P_{(t-1)^+}f(x)\\
&\le C P_{t\wedge 1}P_{(t-1)^+}f(y)\bigg(1+\frac{|x-y|}{(t\wedge1)^{1/\alpha}}\bigg)^{d+\alpha}\\
&=P_tf(y)\bigg(1+\frac{|x-y|}{(t\wedge1)^{1/\alpha}}\bigg)^{d+\alpha},\end{align*}
where in the inequality above we have used again the fact that
$P_{(t-1)^+}f\in B_b^+(\R^d)$. The proof is end.
\end{proof}

To end up this section, we further present two remarks about Theorem \ref{section1th1}.
\begin{remark}\rm  (1) The proof of Theorem \ref{section1th1} yields that \eqref{section1th1222}
and \eqref{section1th12223333} hold for the following two classes of
L\'{e}vy type processes: (i) \emph{Stable-like processes on $\R^d$},
whose L\'{e}vy jump kernel is given by
$j(x,y)=\frac{c(x,y)}{|x-y|^{d+\alpha}}$ and $c(x,y)$ is a symmetric
function on $\R^d\times \R^d$ that is bounded upper and below by two
positive constants. More details about stable-like processes are
referred to (Chen and Kumagai, 2003, Theorem 1.1) and (Chen and
Kumagai, 2008, Theorem 1.2). (ii) \emph{Markov processes on $\R^d$
generated by $\triangle^{\alpha/2}+b(x)\cdot\bigtriangledown$},
where $1<\alpha<2$ and $b$ in the Kato class
$\mathscr{K}_d^{\alpha-1}$ on $\R^d$. The readers can see (Bogdan
and Jakubowski, 2007, Theorem 2) for the comparability between the
transition density for the small time of these processes and that of
rotationally invariant symmetric $\alpha$-stable L\'{e}vy processes.

(2) As shown in (Wang, 1997, 2006b, 2007, 2010a), to derive
contractivity properties of the semigroup from the Harnack
inequality, one needs concentration properties of the associated
invariant measure, which is however unknown for the present setting.
\end{remark}

\subsection{Proof of Theorem \ref{section1th2}}

We begin with recalling some results about a rotationally invariant symmetric truncated
$\alpha$-stable L\'{e}vy process $X_t$. The corresponding L\'{e}vy measure is
given by
\begin{equation*}\nu(dz)=c|z|^{-(d+\alpha)}\I_{\{|z|\le r\}}dz,\end{equation*}
where $c$, $r\in (0,\infty)$ and $\alpha\in(0,2).$ Let $p_t(x,y)$ be
the density function of $X_t$. Then, according to (Chen et al.,
2008, Proposition 2.1 and Theorems 2.3 and 3.6), there exist
positive constants $c_i>0$ $(i=1,\cdots,6)$ such that for any
$t\in(0,1]$ and $x$, $y\in\R^d$ with $|x-y|\le 1$,
\begin{equation}\label{tden1}c_2\bigg(t^{-d/\alpha}\wedge\frac{t}{|x-y|^{d+\alpha}}\bigg) \le p_t(x,y)\le c_1\bigg(t^{-d/\alpha}
\wedge\frac{t}{|x-y|^{d+\alpha}}\bigg),\end{equation} and for any
$t\in(0,1]$ and $x$, $y\in\R^d$ with $|x-y|> 1$,
\begin{equation}\label{tden2} c_5\bigg(\frac{t}{|x-y|}\bigg)^{c_6|x-y|}\le p_t(x,y)\le c_3\bigg(\frac{t}{|x-y|}\bigg)^{c_4|x-y|}.\end{equation}
Moreover, by (Chen et al., 2008, Proposition 2.2), there also exists
a constant $c_7>0$ such that for any $t\in(0,1]$ and $x$,
$y\in\R^d$,
\begin{equation}\label{tden3}p_t(x,y)\le c_7t^{-d/\alpha}.\end{equation}


\medskip

\begin{lemma}\label{levttt} Let $p_t(x,y)$ be the density function of the rotationally invariant symmetric truncated
$\alpha$-stable L\'{e}vy process $X_t$ above. Then, there are two
positive constants $C_1$ and $C_2$ such that for any $0<t\le 1$,
$x$, $y$ and $z\in\R^d$,
\begin{equation}\label{st3ttt4444}\frac{p_t(x,z)}{p_t(y,z)}\le C_1 t^{-d/\alpha}\Big(\frac{2\vee|x-y|\vee|y-z|}{t}\Big)
^{C_2(2\vee |x-y|\vee|y-z|)}.
\end{equation}
\end{lemma}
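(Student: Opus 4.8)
The plan is to bound the ratio $p_t(x,z)/p_t(y,z)$ by separately controlling the numerator from above and the denominator from below, using the truncated-stable estimates \eqref{tden1}, \eqref{tden2} and \eqref{tden3}. The key difficulty, compared with the untruncated case of Lemma \ref{levt}, is that the off-diagonal decay of $p_t$ is no longer polynomial but super-exponential (of the form $(t/|x-y|)^{c|x-y|}$) once distances exceed $1$. This means the ratio can be genuinely large when $|y-z|$ is big, and the bound \eqref{st3ttt4444} reflects exactly this: it is a quantity of the form $t^{-d/\alpha}(R/t)^{CR}$ with $R=2\vee|x-y|\vee|y-z|$. So the whole point is to produce, uniformly, an upper bound on the numerator and a matching lower bound on the denominator whose quotient has this shape.

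First I would fix $R:=2\vee|x-y|\vee|y-z|$ and handle the numerator $p_t(x,z)$ crudely: by \eqref{tden3} we always have $p_t(x,z)\le c_7 t^{-d/\alpha}$, which already gives the factor $t^{-d/\alpha}$ in \eqref{st3ttt4444}; I would not try to extract decay from the numerator at all, since a uniform diagonal bound is enough. The real work is the lower bound on $p_t(y,z)$, and here I would split into cases according to the size of $|y-z|$. When $|y-z|\le 1$ (and in particular $|y-z|\le 1\le R$), the lower bound in \eqref{tden1} applies and gives $p_t(y,z)\ge c_2\bigl(t^{-d/\alpha}\wedge t|y-z|^{-(d+\alpha)}\bigr)$; I would bound this below by a power of $t$ (losing at most a fixed power since $t\le 1$ and $|y-z|\le 1$), which feeds into the $(R/t)^{CR}$ factor harmlessly. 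When $|y-z|>1$, I would invoke the lower estimate in \eqref{tden2}, namely $p_t(y,z)\ge c_5\bigl(t/|y-z|\bigr)^{c_6|y-z|}$.

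The main obstacle will be the bookkeeping in the case $|y-z|>1$: I must show that $c_7 t^{-d/\alpha}$ divided by $c_5(t/|y-z|)^{c_6|y-z|}$ is dominated by $C_1 t^{-d/\alpha}(R/t)^{C_2 R}$. Since $|y-z|\le R$ and $t\le 1$, the quotient of the two exponential factors is $c_5^{-1}(|y-z|/t)^{c_6|y-z|}\le c_5^{-1}(R/t)^{c_6 R}$, because $R/t\ge 1$ and the exponent $c_6|y-z|\le c_6 R$ only increases a base that exceeds one. Absorbing the constant $c_5^{-1}c_7$ into $C_1$ and choosing $C_2\ge c_6$ then yields \eqref{st3ttt4444}. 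I would finally remark that the constant $C_2$ must be taken large enough to simultaneously cover the polynomial loss from the $|y-z|\le 1$ regime, but since that loss is only a fixed power of $t^{-1}$ while $(R/t)^{C_2 R}$ grows at least like $(2/t)^{2C_2}$, a single choice of $C_1,C_2$ works in all cases. The verification is routine once the super-exponential factor is correctly monotonized in $R$; that monotonicity step, $(|y-z|/t)^{c_6|y-z|}\le (R/t)^{c_6 R}$, is the one place where I would be careful to justify that enlarging both the base above $1$ and the exponent preserves the inequality.
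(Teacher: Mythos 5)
Your proof is correct and takes essentially the same route as the paper's: the numerator is bounded uniformly by $c_7t^{-d/\alpha}$ via \eqref{tden3}, the denominator is bounded below by a case analysis on $|y-z|$ using \eqref{tden1} when $|y-z|\le 1$ and \eqref{tden2} when $|y-z|>1$, and the resulting quotients are absorbed into the form $C_1t^{-d/\alpha}\big(R/t\big)^{C_2R}$ with $R=2\vee|x-y|\vee|y-z|$. The only (cosmetic) differences are that you merge the paper's two sub-unit cases ($|y-z|\le t^{1/\alpha}$ and $t^{1/\alpha}<|y-z|\le 1$) into one by noting the lower bound $c_2t$ suffices, and you monotonize directly in $R$ rather than passing through the paper's final two-case display.
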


\begin{proof}  Fix $t\in (0,1]$. When $|y-z|\le t^{1/\alpha}$, by \eqref{tden1}, $$p_t(y,z)\ge c_2 t^{-d/\alpha}.$$ Thus, according to \eqref{tden3},
$$\frac{p_t(x,z)}{p_t(y,z)}\le \frac{c_7t^{-d/\alpha}}{c_2 t^{-d/\alpha}}=\frac{c_7}{c_2}.$$ Similarly, when $t^{1/\alpha}< |y-z|\le 1$, $$p_t(y,z)\ge
\frac{c_2t}{|x-y|^{d+\alpha}},$$which implies that
$$\frac{p_t(x,z)}{p_t(y,z)}\le \frac{c_7t^{-d/\alpha}}{c_2 t/|y-x|^{-(d+\alpha)}}=\frac{c_7}{c_2t^{d/\alpha+1}}.$$
Furthermore, by \eqref{tden2} and \eqref{tden3}, if $|y-z|\ge 1$,
then
$$\frac{p_t(x,z)}{p_t(y,z)}\le
\frac{c_7t^{-d/\alpha}}{c_5\big({t}/{|y-z|}\big)^{c_6|y-z|}}\le
\frac{c_7}{c_5t^{d/\alpha}}\Big(\frac{|y-z|}{t}\Big)^{c_6|y-z|}.$$
In particular, when $1\le |y-z|<2\vee|x-y|$,
$$\frac{p_t(x,z)}{p_t(y,z)}\le\frac{c_7}{c_5t^{d/\alpha}}\Big(\frac{|y-z|}{t}\Big)^{c_6|y-z|}\le
\frac{c_7}{c_5t^{d/\alpha}}\Big(\frac{2\vee|x-y|}{t}\Big)^{c_6(2\vee|x-y|)}.$$
Combining with all the estimates above, for any $0<t\le 1$, $x$, $y$
and $z\in\R^d$,
\begin{equation*}\label{st3ttt}\frac{p_t(x,z)}{p_t(y,z)}\le \begin{cases}C_1 t^{-d/\alpha}\Big(\frac{2\vee|x-y|}{t}\Big)^{C_2(2\vee |x-y|)} ,& |y-z|\le 2\vee|x-y|,\\
\qquad C_3 t^{-d/\alpha}\Big(\frac{|y-z|}{t}\Big)^{C_4|y-z|},
 &|y-z|> 2\vee|x-y|.\end{cases}
\end{equation*}
It follows our desired assertion.
\end{proof}

\begin{proposition}\label{levtsss4545} Let $P_t$ be the semigroup of the rotationally invariant symmetric truncated
$\alpha$-stable L\'{e}vy process $X_t$ above. Then for any $t>0$,
$x$, $y\in\R^d$ and $f\in B_b^+(\R^d)$ with $f\ge1$,
\begin{equation}\label{logstrongharnack}P_t(\log f)(x)\le \log P_tf(y)+C\big(1+|x-y|\big)\log\bigg(\frac{2+|x-y|}{t\wedge1}\bigg).\end{equation}
\end{proposition}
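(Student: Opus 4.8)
The plan is to combine the Gibbs variational (entropy) inequality with the pointwise ratio bound of Lemma \ref{levttt}. First I would reduce to $t\in(0,1]$, since \eqref{st3ttt4444} is only available there. For $t>1$, write $P_t=P_1P_{t-1}$ and note that $f\ge1$ forces $g:=P_{t-1}f\ge P_{t-1}1=1$; Jensen's inequality gives $P_{t-1}(\log f)\le\log P_{t-1}f=\log g$, so that $P_t(\log f)(x)=P_1(P_{t-1}\log f)(x)\le P_1(\log g)(x)$. Applying the $t=1$ case of the inequality to $g$ and using $t\wedge1=1$ then yields the claim for $t>1$, so it suffices to treat $0<t\le1$.

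For fixed $t\in(0,1]$ and $x$, $y$, set $\mu(dz)=p_t(y,z)\,dz$ and $h(z)=p_t(x,z)/p_t(y,z)$; the lower bounds in \eqref{tden1}--\eqref{tden2} guarantee $p_t(y,\cdot)>0$, so $h$ is well defined and $\int h\,d\mu=\int p_t(x,z)\,dz=1$. The Gibbs variational inequality $\int gh\,d\mu\le\int h\log h\,d\mu+\log\int e^{g}\,d\mu$, applied with $g=\log f$ (finite since $f\ge1$ is bounded), reads
\begin{equation*}
P_t(\log f)(x)\le\log P_t f(y)+\int\log\frac{p_t(x,z)}{p_t(y,z)}\,p_t(x,z)\,dz.
\end{equation*}
Thus the whole problem reduces to bounding the relative entropy term on the right by $C(1+|x-y|)\log\frac{2+|x-y|}{t}$.

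To estimate that term I would take logarithms in \eqref{st3ttt4444} and integrate against the probability measure $p_t(x,z)\,dz$. Writing $R=2\vee|x-y|$ and $w=R\vee|y-z|$, the integrand is controlled by $\log C_1+\frac{d}{\alpha}\log(1/t)+C_2\,w\log(w/t)$. On the region $|y-z|\le2R$ one has $w\le2R$, so this part integrates against a probability measure to at most $CR\log(R/t)$, which is of the desired order since $R\le2(1+|x-y|)$ and $\log(R/t)\le\log\frac{2+|x-y|}{t}$. The constant and the $\frac{d}{\alpha}\log(1/t)$ terms are likewise dominated by $R\log(R/t)$, because $R\ge2$ forces $R\log(R/t)\ge2\log(2/t)$.

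The main obstacle is the far region $|y-z|>2R$. There I would use $|x-y|\le R<|y-z|/2$ to get $|x-z|\ge|y-z|-|x-y|>|y-z|/2\ge R\ge2$, so in particular $|x-z|>1$ and $|y-z|<2|x-z|$. This allows me to replace $p_t(x,z)$ by the super-exponential tail bound \eqref{tden2} and pass to a radial integral in $u=|x-z|$:
\begin{equation*}
\int_{|y-z|>2R}|y-z|\log\frac{|y-z|}{t}\,p_t(x,z)\,dz\le C\int_1^\infty u\log\frac{2u}{t}\,\Big(\frac{t}{u}\Big)^{c_4u}u^{d-1}\,du.
\end{equation*}
Since $t\le1$ gives $(t/u)^{c_4u}\le u^{-c_4u}$, the factor $u^{-c_4u}$ secures convergence and, splitting $\log\frac{2u}{t}=\log(2u)+\log(1/t)$, bounds this by $C(1+\log(1/t))$, which is again absorbed into $R\log(R/t)$. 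Assembling the three contributions yields $\int\log\frac{p_t(x,z)}{p_t(y,z)}\,p_t(x,z)\,dz\le C(1+|x-y|)\log\frac{2+|x-y|}{t}$, completing the proof for $t\le1$ and, via the first step, for all $t>0$.
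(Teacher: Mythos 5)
Your proposal is correct and follows essentially the same approach as the paper's proof: the same entropy (Young/Gibbs variational) inequality reducing everything to the relative entropy term $\int\log\frac{p_t(x,z)}{p_t(y,z)}\,p_t(x,z)\,dz$, the same use of Lemma \ref{levttt} with a region splitting at $|y-z|\approx 2\vee|x-y|$ together with the tail bound \eqref{tden2} on the far region, and the same semigroup-factorization-plus-Jensen argument for $t>1$ (which the paper carries out at the end rather than the start). The only differences are cosmetic, e.g.\ integrating radially in $|x-z|$ instead of $|y-z|$ on the far region.
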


\begin{proof} (1) For any $f\in B_b^+(\R^d)$ with $f\ge 1$ and $t>0$,
\begin{equation}\label{proftrstable1} \aligned P_t(\log f)(x)&=\int\! \big(\log f(z)\big) p_t(x,z)\, dz\\
&=\int\!\big(\log f(z)\big)\, \frac{p_t(x,z)}{p_t(y,z)}\,p_t(y,z)dz\\
&\le\displaystyle\int\!\log\bigg(\frac{p_t(x,z)}{p_t(y,z)}
\bigg)\,p_t(x,z)dz+\log\! \bigg(\int
f(z)p_t(y,z)dz\bigg)\\
&= \displaystyle\int\log\bigg(\frac{p_t(x,z)}{p_t(y,z)}\bigg)
p_t(x,z)dz+\log P_tf(y) ,\endaligned\end{equation} where in the
above inequality we have used the following Young inequality: for
any probability measure $\mu$, if $g$, $h\ge 0$ with $\mu(g)=1$,
then
$$\mu(gh)\le \mu (g\log g)+\log\mu(e^{h}).$$

(2) According to \eqref{st3ttt4444}, for any $0<t\le 1$ and $x$,
$y,$ $z\in\R^d$,
$$\log \frac{p_t(x,z)}{p_t(y,z)}\le \log C_1+\frac{d}{\alpha}\log\frac{1}{t}+C_2\big(2\vee|x-y|\vee|y-z|\big)\log \bigg(\frac{2\vee|x-y|\vee|y-z|}{t}\bigg). $$ On the
other hand, if $|y-z|\ge 2\vee|x-y|$, then $$|x-z|\ge |y-z|-|x-y|\ge
|y-z|/2\ge 1,$$ and so, by \eqref{tden2},
$$p_t(x,z)\le c_3\bigg(\frac{t}{|x-z|}\bigg)^{c_4|x-z|}\le
c_3\bigg(\frac{2t}{|y-z|}\bigg)^{c_4|y-z|/2}.$$
Therefore,\begin{equation}\label{proftrstable2}\aligned
\displaystyle\int\!&\log\!
\bigg(\frac{p_t(x,z)}{p_t(y,z)}\bigg)p_t(x,z)dz\\
\le &\log
C_1+\frac{d}{\alpha}\log \frac{1}{t}\\
&+C_2\big(2\vee|x-y|\big)\log
\bigg(\frac{2\vee|x-y|}{t}\bigg)\int_{|y-z|\le
2\vee|x-y|}p_t(x,z)dz\\
&+C_2\int_{|y-z|> 2\vee|x-y|}|y-z|\log
\bigg(\frac{|y-z|}{t}\bigg)p_t(x,z)dz\\
\le &\log C_1+\frac{d}{\alpha}\log
\frac{1}{t}+C_2\big(2\vee|x-y|\big)\log
\bigg(\frac{2\vee|x-y|}{t}\bigg)\\
&+C_3\int_{|y-z|>
2\vee|x-y|}|y-z|\bigg(\frac{2t}{|y-z|}\bigg)^{c_4|y-z|/2}\log
\bigg(\frac{|y-z|}{t}\bigg)dz\\
\le &\log C_1+\frac{d}{\alpha}\log
\frac{1}{t}+C_2\big(2\vee|x-y|\big)\log
\bigg(\frac{2\vee|x-y|}{t}\bigg)\\
&+C_4t^{c_4}\big(1+\log t^{-1}\big)\int_{|y-z|>
2}|y-z|^{-c_4|y-z|/2+1}\log |y-z| dz\\
\le &C\big(1+|x-y|\big)\log\bigg(\frac{2+|x-y|}{t}\bigg).
\endaligned\end{equation}

(3) For $t\in (0,1]$, the required assertion follows from
\eqref{proftrstable1} and \eqref{proftrstable2}. For any $t>0$ and
$f\in B_b^+(\R^d)$ with $f\ge 1$,
\begin{align*}P_t(\log f)(x)&=P_{t\wedge 1}P_{(t-1)^+}(\log f)(x)\\
&\le P_{t\wedge 1}(\log P_{(t-1)^+}f)(x)\\
 &\le\log \big(P_{t\wedge
1}P_{(t-1)^+}f\big)(y)+C\big(1+|x-y|\big)\log\bigg(\frac{2+|x-y|}{t\wedge1}\bigg)\\
&=\log
P_tf(y)+C\big(1+|x-y|\big)\log\bigg(\frac{2+|x-y|}{t\wedge1}\bigg),\end{align*}
where the first inequality follows from the Jensen inequality: for
any probability measure $\mu$ and $f\ge0$, $\mu(\log f)\le \log
\mu(f),$ and in the second inequality we have used the fact that
$P_{(t-1)^+}f\ge1$ and the conclusion for $t\in(0,1]$ proved in the
previous two steps. We have proved our desired assertion.
\end{proof}

\begin{remark}By the proof of Proposition \ref{levtsss4545},
the inequality \eqref{logstrongharnack} is based on estimates
\eqref{tden1}, \eqref{tden2} and \eqref{tden3}. Therefore,
\eqref{logstrongharnack} also holds for L\'{e}vy type processes,
whose L\'{e}vy jump kernel is given by
$j(x,y)=\frac{c(x,y)}{|x-y|^{d+\alpha}}\I_{\{|x-y|\le r\}}$. Here,
$r\in(0,\infty)$ and $c(x,y)$ is a symmetric function on $\R^d\times
\R^d$ that is bounded upper and below by two positive constants, see
e.g.\ (Chen et al., 2008).\end{remark}

\medskip

We finally turn to the proof of Theorem \ref{section1th2}.

\begin{proof}[Proof of Theorem \ref{section1th2}]Under the condition in Theorem \ref{section1th2}, we
can write $$X_t=X_t'+X_t^{T},$$ where $X_t^{T}$ is a rotationally invariant symmetric truncated $\alpha$-stable L\'{e}vy process with L\'{e}vy measure
$$\nu_{X^T}(dz):=c|z|^{-d-\alpha}\I_{\{|z|\le r\}}dz,$$ and $X_t'$
is a L\'{e}vy process with L\'{e}vy measure
$$\nu_{X'}(dz):=\nu(dz)- c|z|^{-d-\alpha}\I_{\{|z|\le r\}}dz\ge0.$$ Let $P_t$, $P_t^{T}$ and $P'_t$ be the semigroups of the L\'{e}vy processes $X_t$, $X_t^{T}$
and $X_t'$, respectively. Then, according to Proposition
\ref{levtsss4545}, there exists $C>0$ such that for any $t>0$, $x$,
$y\in\R^d$ and $f\in B_b^+(\R^d)$ with $f\ge1$,
$$P^T_t(\log f)(x)\le \log P^T_tf(y)+C\big(1+|x-y|\big)\log\bigg(\frac{2+|x-y|}{t\wedge1}\bigg).$$
Therefore, for $t>0$ and $f\in B_b^+(\R^d)$ with $f\ge1$,
\begin{align*}P_t(\log f)(x)&=P^{T}_tP'_t(\log f)(x)\\
&\le
P^{T}_t\log (P'_tf)(x)\\
&\le C\log (P^{T}_tP'_t)f(y)+C\big(1+|x-y|\big)\log\bigg(\frac{2+|x-y|}{t\wedge1}\bigg)\\
&=C\log
P_tf(y)+C\big(1+|x-y|\big)\log\bigg(\frac{2+|x-y|}{t\wedge1}\bigg)
,\end{align*} where in the above two inequalities we have used the
Jensen inequality and the fact that $P'_tf\ge1$, respectively. The
proof is complete.
\end{proof}

\bigskip

\noindent{\bf Acknowledgement.} The author would like to thank
Professor Feng-Yu Wang for comments on earlier versions of the paper,
and to thank Professor Ren\'{e} L. Schilling for helpful
conversations. Financial support through the Alexander-von-Humboldt
Foundation and the Natural Science Foundation of Fujian $($No.\!
2010J05002$)$ is also gratefully acknowledged. He also would like
to thank an anonymous referee for carefully corrections on the first
draft of this paper.

\end{document}